\theoremstyle{plain}
\newtheorem{theorem}{Theorem}[section]
\newtheorem{lemma}[theorem]{Lemma}
\newtheorem{corollary}[theorem]{Corollary}
\newtheorem{proposition}[theorem]{Proposition}
\theoremstyle{definition}
\theoremstyle{remark}
\begin{document}
	
	\title[]
	{Mather $\beta$-function for ellipses and rigidity}
	
	\date{}
	\author{Misha Bialy}
	\address{School of Mathematical Sciences, Raymond and Beverly Sackler Faculty of Exact Sciences, Tel Aviv University,
		Israel} 
	\email{bialy@tauex.tau.ac.il}
	\thanks{MB was partially supported by ISF grant 580/20 and DFG grant MA-2565/7-1  within the Middle East Collaboration Program.}

	
	\begin{abstract}
		The goal of the first part of this note is to get an explicit formula for rotation number and Mather $\beta$-function for ellipse. This is done here with the help of non-standard generating function of billiard problem. In this way the derivation especially simple. In the second part we discuss application of Mather $\beta$-function to rigidity problem.
		
	\end{abstract}
	
	\maketitle


		\section{Introduction}
		
		Consider the confocal family of ellipses
		$$
		E_\lambda=\left\{\frac{x^2}{a^2-\lambda}+\frac{y^2}{b^2-\lambda}=1\right\},\quad 0<\lambda <b^2<a^2.
		$$ 
		The initial ellipse is $E=E_0$. Polygonal lines with the vertices on $E$
		circumscribed about confocal caustic $E_\lambda$ correspond to billiard trajectories of the billiard in $E$.  A caustic $E_\lambda$ is called rational \cite{KS}, of rotation number $\rho=m/n$, if a billiard trajectory circumscribing $E_\lambda$ closes after $n$ reflections making $m$ rotations.  These closed billiard trajectories are called Poncelet polygons. By famous Poncelet theorem if one billiard trajectory tangent to $E_\lambda$ is closed with $\rho=m/n$, then all of them are closed with the same $\rho$. 
		Given $E_\lambda$ all Poncelet polygons have the same perimeter. Mather $\beta$-function assigns the value of this perimeter divided by the number of vertices. Let me remark that traditionally Mather $\beta$-function is negative of ours. However we prefer, for convenience, sign $+$ for generating function and hence for Mather $\beta$-function as well. 
		
		{\bf Example.}
		It is not difficult to compute the perimeter and the corresponding $\lambda$ for $4$-gons.
		Namely the perimeter equals $4\sqrt{a^2+b^2}$ and hence $\beta(1/4)=\sqrt{a^2+b^2}$, $\sqrt{\lambda}={ab}/{\sqrt{a^2+b^2}}$. Remarkably, the perimeter of 
		Poncelet triangles and the corresponding $\lambda$ can be geometrically found, but this requires solution of cubic equation.
		We leave this as an exercise.

		In this note we show how to compute the perimeter of the Poncelet polygons for a given caustic $E_\lambda$. 
		Notice that the straightforward computation of the lengths of the edges seems to be difficult. 
		The main idea of this paper is to use {\it non-standard generating function} of the billiard. Thus we  bypass this difficulty by expressing the action functional via non-standard generating function. This approach leads immediately to a formula containing pseudo-elliptic integral, which can be reduced further to elliptic integrals, using \cite{Byrd-Friedman}. We also get by this method  the known formulas for rotation number and the invariant measure  \cite{chang}\cite{KS} in a very transparent way.
		There is an extensive literature on Poncelet porism, formulas for invariant measure and the rotation number. I refer to the incomplete list of papers on the subject \cite{chang}\cite{dragovich}\cite{KS}\cite{stachel}\cite{stachel2}\cite{tabanov}.
		However, the formula for Mather $\beta-$ function (it was explicitly asked in \cite{zelditch1}\cite{zelditch}), to the best of my knowledge, is new (cf. a very recent paper \cite{reznik3} for a similar formula by a different approach).

		For the proofs we use non-standard generating function for convex billiards, which simplifies the calculations significantly. It was already used in our paper \cite{BT} explaining remarkable conservation laws in elliptical billiards discovered recently by Dan Reznik \cite{reznik},\cite{reznik2} et al, see also \cite{AST}\cite{stachel}\cite{stachel2}. Also the non-standard generating function was a key ingredient in the recent proof of a part of Birkhoff conjecture for centrally symmetric billiard tables \cite{BM}. The non-standard generating function for ellipses was found independently by Yu. Suris \cite{suris}.
		
		Mather $\beta$-function is very important function related both to classical dynamics inside the domain as well as to the spectral problems. 
		In this paper we shall discuss in Section \ref{sec:rigidity} the relation of Mather $\beta-$ function to the rigidity questions. The idea to use Mather $\beta$-function for rigidity in billiards belongs to K.F.Siburg \cite{siburg}. We refer \cite{zelditch1} \cite{HZ} \cite{KS2} \cite{KS} \cite{sorrentino} for further developments and other approaches.
		\section*{Acknowledgments} This paper is a continuation of our previous paper with Sergei Tabachnikov \cite{BT}. I am grateful to him for useful discussions and providing references.
		\section{Main Results}
		In this section we formulate our main contributions. Other results are placed in the corresponding Sections.

		\begin{theorem}\label{thm:action}
			Consider the invariant curve of rotation number $\rho$ corresponding to the caustic $E_\lambda$, and the value $J$ of Joachimsthal integral. 
			Mather $\beta$-function corresponding to the caustic $E_\lambda$ having the eccentricity $f$ is given by the following formula:
			$$\beta(\rho)=\frac{2ce\sqrt{e^2-f^2}}{e^2-1}-\frac{2cf}{K(k)}[K(k)E(\phi,k)-E(k)F(\phi,k)],$$
			where $E(\phi,k)$ is elliptic integral of the second kind, $K(k), E(k)$  are complete elliptic integrals of first and second kind, and $e,f$ are eccentricities of the ellipses $E,E_\lambda$. 
		\end{theorem}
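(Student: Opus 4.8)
The plan is to exploit the \emph{non-standard generating function} $S$ of the billiard in $E$, whose existence and explicit form for confocal billiards is recorded in \cite{BT,suris}. The decisive point is that $S$ generates the \emph{same} billiard map as the usual chord-length generating function $h(x_1,x_2)=-\abs{P(x_1)-P(x_2)}$, and therefore differs from $h$ by a discrete coboundary $g(x_2)-g(x_1)$. Since coboundaries telescope along any closed orbit, for every Poncelet $n$-gon inscribed in $E$ and tangent to $E_\lambda$ the total perimeter equals $\sum_{i=1}^n S(x_i,x_{i+1})$ (with the sign convention of the present paper). Consequently the Mather $\beta$-function, being the perimeter per vertex, is the average of $S$ against the invariant probability measure $\mu$ on the invariant curve of the caustic:
$$\beta(\rho)=\int S\,d\mu.$$
The advantage is that $S$, unlike the chord length, has a transparent closed form on each confocal caustic, so this average becomes a single tractable integral.

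First I would fix the invariant curve by the value $\lambda$, equivalently by the Joachimsthal integral $J$, and introduce on it the natural confocal (elliptic) coordinate. In this coordinate the billiard map is conjugate to the rigid rotation by $\rho$, the invariant measure takes the classical form $dt/\sqrt{\cdots}$, and $S$ becomes an explicit algebraic expression in $\cos t$, $\sin t$ and a square root of a quartic determined by $a,b,\lambda$. Carrying out the normalization by the total mass, and using that one billiard step advances the conjugating angle by a fixed amount, reduces $\beta(\rho)$ to a pseudo-elliptic integral over one fundamental domain; the amplitude of the resulting incomplete integrals is exactly the quantity $\phi$ appearing in the companion formula for the rotation number, which the same method produces (cf. \cite{chang,KS}), so that $\phi$ is determined by $\rho$.

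It remains to evaluate this pseudo-elliptic integral. I would split the integrand into an algebraically exact part, which integrates to an elementary function depending only on the eccentricities and produces the term $\tfrac{2ce\sqrt{e^2-f^2}}{e^2-1}$, and a genuinely elliptic remainder. Reducing the latter to Legendre normal form by the tables of \cite{Byrd-Friedman} yields the complete integrals $K(k),E(k)$ from the normalization and the incomplete integrals $E(\phi,k),F(\phi,k)$ from the limits of one fundamental domain. The surviving combination $K(k)E(\phi,k)-E(k)F(\phi,k)$ is, up to the factor $K(k)$, precisely the Jacobi zeta function $Z(\phi,k)=E(\phi,k)-\tfrac{E(k)}{K(k)}F(\phi,k)$, the canonical pseudo-elliptic object obtained when the secular (mean) growth is removed; this is exactly why the non-elementary part of $\beta$ collapses to that bilinear expression, with prefactor $2cf/K(k)$.

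The main obstacle is this last step: carrying out the Byrd-Friedman reduction while \emph{correctly identifying} the modulus $k$ and the amplitude $\phi$ in terms of $e,f,\lambda,J$, and verifying that the non-elliptic contribution is exactly the stated elementary term rather than some other algebraic function. This is pure bookkeeping of constants --- tracking the eccentricities through the confocal change of variables and matching the output of the reduction tables to the clean closed form --- but it is where all the care is needed; by contrast the conceptual steps, namely coboundary invariance and the reduction to an average over the caustic, are comparatively routine once the non-standard generating function is available.
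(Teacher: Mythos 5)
Your proposal follows essentially the same route as the paper: express the perimeter of a Poncelet polygon through the non-standard generating function $S=2h(\psi)\sin\delta$ (your telescoping/coboundary identity is exactly the fact the paper imports from \cite{BT}), average $S$ against the invariant Gelfand--Leray measure on the invariant curve of the caustic, and reduce the resulting pseudo-elliptic integral via the Byrd--Friedman tables to the combination $K(k)E(\phi,k)-E(k)F(\phi,k)$. The only cosmetic differences are that the paper extracts the elementary term and the amplitude $\phi=\arcsin(\alpha/k)$ from the reduction of a complete elliptic integral of the third kind $\Pi(\alpha^2,k)$ (integrals 254.13, 339.01, 414.01 of \cite{Byrd-Friedman}) rather than by splitting the integrand in advance, and it does not need the rotation-number formula inside this particular proof.
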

		\begin{corollary}\label{cor:geometric}
			The following formula holds
			$$
			\beta(\rho)=\frac{2a\sqrt{\lambda}}{b}-2\sqrt{a^2-\lambda}E(\phi,k)+\rho|E_\lambda|,$$
			$$\phi=\arcsin\frac{\sqrt{\lambda}}{b},\  k=1/f.$$
		\end{corollary}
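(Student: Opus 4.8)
The plan is to read off Corollary~\ref{cor:geometric} from Theorem~\ref{thm:action} purely by substituting the confocal geometry of the family $E_\mu$ and then recognizing the surviving elliptic integrals as the rotation number times the perimeter of the caustic. Every ellipse of the family shares the foci at distance $c=\sqrt{a^{2}-b^{2}}$; the semi-major axes of $E$ and of $E_\lambda$ are $a$ and $\sqrt{a^{2}-\lambda}$. In the normalization used in the Theorem this is encoded by $ce=a$ and $cf=\sqrt{a^{2}-\lambda}$, so that $k=1/f=c/\sqrt{a^{2}-\lambda}$ is exactly the ordinary eccentricity of $E_\lambda$ (matching the relation $k=1/f$ stated in the Corollary), while $e^{2}-f^{2}=\lambda/c^{2}$ and $e^{2}-1=b^{2}/c^{2}$.

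First I would rewrite the non-integral term. Inserting $ce=a$ and $\sqrt{e^{2}-f^{2}}=\sqrt{\lambda}/c$ into the first term of Theorem~\ref{thm:action} is a routine simplification that collapses it to the elementary expression $2a\sqrt{\lambda}/b$, the first term of the Corollary. Next, expanding the bracket in the Theorem gives the two summands $-2cf\,E(\phi,k)$ and $\tfrac{2cf\,E(k)}{K(k)}F(\phi,k)$; since $cf=\sqrt{a^{2}-\lambda}$, the first of these is precisely $-2\sqrt{a^{2}-\lambda}\,E(\phi,k)$, the middle term of the Corollary.

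It remains to identify the last summand with $\rho\,|E_\lambda|$, and for this I would invoke two facts. The perimeter of the caustic is the standard quantity $|E_\lambda|=4\sqrt{a^{2}-\lambda}\,E(k)$ (four times the semi-major axis times the complete integral $E$ of the eccentricity $k$), and the rotation number of the invariant curve determined by $E_\lambda$ is
\[
\rho=\frac{F(\phi,k)}{2K(k)},\qquad \phi=\arcsin\frac{\sqrt{\lambda}}{b},
\]
the formula obtained from the same non-standard generating function and announced in the Introduction. Multiplying these and using $cf=\sqrt{a^{2}-\lambda}$ gives
\[
\rho\,|E_\lambda|=\frac{F(\phi,k)}{2K(k)}\cdot 4\sqrt{a^{2}-\lambda}\,E(k)=\frac{2\sqrt{a^{2}-\lambda}\,E(k)}{K(k)}\,F(\phi,k)=\frac{2cf\,E(k)}{K(k)}\,F(\phi,k),
\]
which is exactly the remaining summand. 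Assembling the three pieces yields the asserted formula.

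The only step with genuine content is this last identification: one must recognize the combination $\tfrac{E(k)}{K(k)}F(\phi,k)$ of a complete and an incomplete integral as simultaneously carrying the perimeter factor $E(k)$ and the rotation-number factor $F(\phi,k)/K(k)$. The crux is therefore checking that the amplitude and modulus produced by the generating-function computation underlying Theorem~\ref{thm:action} are exactly $\phi=\arcsin(\sqrt{\lambda}/b)$ and $k=1/f$, so that the $F(\phi,k)$ appearing there coincides with the $F(\phi,k)$ in the rotation-number formula. Granting this agreement of amplitudes, the whole Corollary is direct substitution of the confocal relations recorded above.
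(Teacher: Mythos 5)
Your proposal is correct and follows essentially the same route as the paper: substitute the confocal relations $ce=a$, $cf=\sqrt{a^2-\lambda}$, $e^2-f^2=\lambda/c^2$ into Theorem~\ref{thm:action}, use $|E_\lambda|=4\sqrt{a^2-\lambda}\,E(k)$, and identify the term $\tfrac{2cf\,E(k)}{K(k)}F(\phi,k)$ with $\rho|E_\lambda|$ via the rotation-number formula $\rho=F(\phi,k)/(2K(k))$ of Theorem~\ref{thm:omega}. The only caveat is that the first term of Theorem~\ref{thm:action} as printed gives $2ac\sqrt{\lambda}/b^2$ rather than $2a\sqrt{\lambda}/b$; tracing the theorem's own derivation shows the denominator there should be $\sqrt{e^2-1}$ rather than $e^2-1$, after which your simplification is exact.
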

		
		{\bf Example.}
		{\it
			1) One can see from this formula that for $\rho=0$, that is when $f\rightarrow e$ (confocal ellipse coincides with the boundary, i.e. $\lambda=0$), it follows that $\phi\rightarrow 0$ and hence $\beta\rightarrow 0$.
			
			2) When $f\rightarrow 1$ (corresponding to the confocal ellipse shrinking to the segment between the focii), $\beta\rightarrow 2a$ -- the diameter (only the first summand of the formula remains, the second one tends to zero).
		}
		
		We give a proof of these formulas Section \ref{thm:action}.
		
		We shall discuss now the relation of Mather $\beta$-function to the rigidity questions.  
		The important question is the following. Let $\Omega_1, \Omega_2$ be two strictly convex domains having the same Mather $\beta$-functions $\beta_1=\beta_2$, can one state that the domains are isometric. It is especially important in view of its applications to  spectral rigidity. 
		
		Remarkably, if $\Omega_1$ is an ellipse then there are many approaches leading to the affirmative answer.
		However we don't consider infinitesimal behavior of Mather $\beta$-function at $0$ (cf.\cite{sorrentino}\cite{KS2}), but study this function on a finite neighborhood of $0$. Our contribution is based on the recent paper with a partial resolution of Birkhoff conjecture for centrally symmetric convex billiards \cite{BM}. The result of \cite {BM} can be formulated in terms of Mather $\beta$-function as follows:
		\begin{theorem}\label{thm:Omega}
			Let $\Omega_1, \Omega_2$ be two strictly convex $C^2$-smooth centrally symmetric planar domains such that $\Omega_1$ is an ellipse.
			Suppose that Mather $\beta$-functions satisfy $$\beta_1(\rho)=\beta_2(\rho),\  \forall \rho\in (0,\frac{1}{4}].$$ Then $\Omega_2$ is an ellipse isometric to $\Omega_1$.
		\end{theorem}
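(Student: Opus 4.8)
The plan is to reduce Theorem~\ref{thm:Omega} to the centrally symmetric Birkhoff-type rigidity result of \cite{BM}, using the known dictionary between differentiability of the Mather $\beta$-function and the existence of invariant caustics. The point is that matching $\beta$-functions with an ellipse forces $\Omega_2$ to carry a full family of caustics on the relevant range of rotation numbers, which is exactly the integrability hypothesis fed into \cite{BM}.

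First I would exploit the explicit formula. Since $\Omega_1$ is an ellipse, Corollary~\ref{cor:geometric} (equivalently Theorem~\ref{thm:action}) expresses $\beta_1$ through complete and incomplete elliptic integrals; in particular $\beta_1$ is real-analytic on $(0,\tfrac12)$, hence differentiable at every point of $(0,\tfrac14]$, with no corners at the rational rotation numbers. By hypothesis $\beta_2=\beta_1$ on $(0,\tfrac14]$, so $\beta_2$ inherits this differentiability throughout the interval.

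Next I would invoke the Aubry--Mather dictionary for the billiard (monotone twist) map, in the spirit of Siburg \cite{siburg}: for a strictly convex billiard the function $\beta$ is convex, and it is differentiable at a rotation number $\rho$ precisely when there exists an invariant circle (caustic) of rotation number $\rho$, whereas a Birkhoff instability region of rotation number $\rho$ produces a corner of $\beta$ at $\rho$. Applied to $\beta_2$, the absence of corners on $(0,\tfrac14]$ yields, for each rational $m/n$ in this range, a convex caustic carrying the maximal $(m,n)$-Poncelet polygons; by convexity and continuity of $\beta_2$ these should assemble into a continuous foliation by convex invariant curves whose rotation numbers fill $(0,\tfrac14]$. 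I expect this passage --- promoting pointwise differentiability of $\beta_2$ into a genuine continuous foliation by convex caustics that matches the precise hypotheses of \cite{BM} --- to be the main technical obstacle, since the Aubry--Mather theory must be married carefully to the geometric notion of caustic and to the regularity demanded in \cite{BM}.

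Finally, this foliation is exactly the integrability input required by \cite{BM}. As $\Omega_2$ is assumed $C^2$-smooth and centrally symmetric, \cite{BM} applies and forces $\Omega_2$ to be an ellipse. With both domains now ellipses, it remains to match semiaxes: by Corollary~\ref{cor:geometric} the $\beta$-function of an ellipse determines the pair $(a,b)$ up to the symmetry $a\leftrightarrow b$ (which corresponds to a rotation), as one already sees from the identity $\beta(1/4)=\sqrt{a^2+b^2}$ of the introductory Example together with the normalization $\beta\to 0$ as $\rho\to 0$ and the further values of the elliptic-integral formula. Hence $\beta_1=\beta_2$ pins down the eccentricity and size of $\Omega_2$, so $\Omega_2$ is isometric to $\Omega_1$.
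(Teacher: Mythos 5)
Your overall strategy coincides with the paper's: use the equality $\beta_2=\beta_1$ with the (smooth, corner-free) $\beta$-function of an ellipse, invoke Mather's criterion that differentiability of $\beta$ at a rational $\rho$ is equivalent to the existence of an invariant circle of periodic minimizers of that rotation number, assemble these into a foliation by caustics near the boundary of the phase cylinder, and feed this integrability into \cite{BM} to conclude that $\Omega_2$ is an ellipse. That part of your proposal is the paper's argument, at essentially the same level of detail (the paper is equally brief about promoting the rational caustics to a genuine foliation, citing \cite{siburg}).

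The gap is in your last step, where you must show the two ellipses are isometric. An ellipse has two parameters, so you need two independent functionals of $(a,b)$ extracted from $\beta$. You correctly get one, $\beta(1/4)=\sqrt{a^2+b^2}$, but your proposed second piece of information --- ``the normalization $\beta\to 0$ as $\rho\to 0$'' --- is vacuous: $\beta(0)=0$ holds for \emph{every} convex domain and carries no geometric content, and the appeal to ``further values of the elliptic-integral formula'' is not an argument (indeed, whether two generic values of $\beta$ determine the ellipse is posed as an open question later in the paper). The paper's second invariant is the \emph{derivative} $\beta'(0^+)$, which equals the perimeter $|\partial\Omega|$; equality of the $\beta$-functions on $(0,\tfrac14]$ forces $|\Omega_1|=|\Omega_2|$. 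One then still needs an injectivity statement: writing $A=a^2+b^2$ and $C=a^2-b^2$, the perimeter
$$
f(C)=2\sqrt{2}\int_{0}^{\pi}\sqrt{A+C\cos t}\;dt
$$
is shown to be strictly decreasing in $C$ for fixed $A$ (by pairing $t$ with $\pi-t$ in the differentiated integral), so $A_1=A_2$ and $|\Omega_1|=|\Omega_2|$ together force $C_1=C_2$ and hence isometry. Without producing a second independent invariant and an argument of this kind, your final step does not close.
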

		In Section \ref{sec:rigidity} we shall give the proof of this result and discuss further application of Mather $\beta$-function to rigidity problems.
		\section{\bf Preliminaries and tools}  
		\subsection{ Non-standard generating function}
		
		Consider  the space of oriented lines in the plane $\mathbf R^2(x,y)$. A line can be written as 
		$$\cos\varphi \cdot x+\sin\varphi \cdot y=p,$$
		where $\varphi$ is the direction of the right normal to the oriented line. Thus  $(p,\varphi) $ are coordinates in the space of oriented lines, see Figure \ref{lines}. The 2-form $\rho=dp \wedge d\varphi$ is the area (symplectic) form on the space of oriented lines used in geometrical optics and integral geometry.

		\begin{figure}[h]
			\centering
			\includegraphics[width=0.2\linewidth]{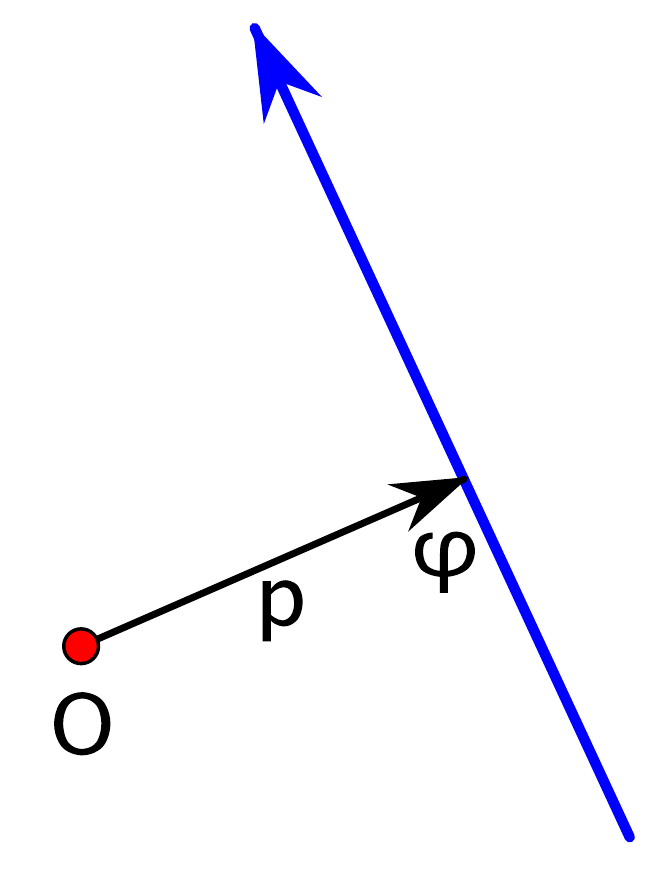}
			\caption{Coordinates in the space of oriented lines.}
			\label{lines}
		\end{figure}
		
		Consider a smooth strictly convex billiard curve $\gamma$, and let $h(\varphi)$ be its support function, that is, the distance from the origin (supposed to be inside $\gamma$) to the tangent line to $\gamma$ at the point where the outer normal has direction $\varphi$.
		{The sub-space $\mathbf{A}$ of the oriented lines intersecting the curve $\gamma$ is the phase space cylinder of the billiard map. The billiard transformation acts on $\mathbf{A}$ as an exact symplectic map.
			$$
			T: (p_1,\varphi_1)\mapsto (p_2,\varphi_2)
			$$
			sends the incoming trajectory to the outgoing one. Let  
			$$
			\psi= \frac{\varphi_1+\varphi_2}{2}, \ \delta=\frac{\varphi_2-\varphi_1}{2},
			$$
			where $\psi$ is the direction of the outer normal at the reflection point and  $\delta$ is the reflection angle. 
			
			\begin{proposition} \label{prop:genfunct}
				The function
				$$
				S(\varphi_1,\varphi_2)=
				2h\left(\frac{\varphi_1+\varphi_2}{2}\right)
				\sin\left(\frac{\varphi_2-\varphi_1}{2}\right)=
				2h(\psi)
				\sin\delta$$
				is a generating function of the billiard transformation, that is, $T(p_1,\varphi_1)= (p_2,\varphi_2)$ if and only if
				$$ -\frac{\partial S_1(\varphi_1,\varphi_2)}{\partial \varphi_1}=p_1,\quad 
				\frac{\partial S_2(\varphi_1,\varphi_2)}{\partial \varphi_2}=p_2.
				$$
			\end{proposition}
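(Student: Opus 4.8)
The plan is to verify the two generating-function relations directly, by a geometric computation based on the support-function parametrization of the boundary $\gamma$. Recall that the point of $\gamma$ whose outer normal points in the direction $\psi$ is
$$Q(\psi)=h(\psi)\,n(\psi)+h'(\psi)\,t(\psi),\qquad n(\psi)=(\cos\psi,\sin\psi),\quad t(\psi)=(-\sin\psi,\cos\psi).$$
This is the reflection point: by the angle conventions in the statement, $\psi=(\varphi_1+\varphi_2)/2$ is exactly the direction of the outer normal at $Q$, and the equal-angle reflection law is encoded by the symmetry of $\varphi_1,\varphi_2$ about $\psi$. Thus $T(p_1,\varphi_1)=(p_2,\varphi_2)$ holds if and only if both the incoming and the outgoing oriented lines pass through $Q(\psi)$, i.e.
$$p_1=\langle Q(\psi),n(\varphi_1)\rangle,\qquad p_2=\langle Q(\psi),n(\varphi_2)\rangle.$$

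Next I would evaluate these inner products using $\psi-\varphi_1=\delta$ and $\psi-\varphi_2=-\delta$. Since $\langle n(\psi),n(\varphi_j)\rangle=\cos(\psi-\varphi_j)$ and $\langle t(\psi),n(\varphi_j)\rangle=\sin(\varphi_j-\psi)$, this gives
$$p_1=h(\psi)\cos\delta-h'(\psi)\sin\delta,\qquad p_2=h(\psi)\cos\delta+h'(\psi)\sin\delta.$$

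Finally I would differentiate $S=2h(\psi)\sin\delta$ by the chain rule, using $\partial\psi/\partial\varphi_1=\partial\psi/\partial\varphi_2=\tfrac12$ together with $\partial\delta/\partial\varphi_1=-\tfrac12$ and $\partial\delta/\partial\varphi_2=\tfrac12$, to obtain
$$-\frac{\partial S}{\partial\varphi_1}=h(\psi)\cos\delta-h'(\psi)\sin\delta,\qquad \frac{\partial S}{\partial\varphi_2}=h(\psi)\cos\delta+h'(\psi)\sin\delta,$$
which coincide with the expressions for $p_1,p_2$ just derived. This establishes both generating relations at once, and the equivalence is genuinely "if and only if" because the relations determine $(p_1,p_2)$ from $(\varphi_1,\varphi_2)$ uniquely, and conversely, so they reproduce precisely the billiard correspondence.

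The computation itself is routine; the one point requiring care is the bookkeeping of orientations and signs — in particular checking that the conventions for the oriented normal direction $\varphi$ and for the reflection angle $\delta$ are consistent, so that the reflection law is faithfully captured by the statement that $\psi$ is the normal bisector and both lines meet at $Q(\psi)$. I would confirm this by verifying $\langle\gamma'(\psi),n(\psi)\rangle=0$, i.e. that $n(\psi)$ is indeed the outer normal at $Q(\psi)$, which follows from $\gamma'(\psi)=(h+h'')\,t(\psi)$ and the convexity assumption $h+h''>0$.
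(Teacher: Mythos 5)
Your proof is correct and follows essentially the same route as the paper: both differentiate $S=2h(\psi)\sin\delta$ by the chain rule and identify the result with $p_j=\langle \gamma(\psi),n(\varphi_j)\rangle$ computed from the support-function parametrization $\gamma(\psi)=h(\psi)n(\psi)+h'(\psi)t(\psi)$. The extra checks you include (that $n(\psi)$ is indeed the outer normal, and the bookkeeping of the equivalence) are consistent with, and slightly more explicit than, the paper's argument.
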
 
			
			\begin{figure}[h]
				\centering
				\includegraphics[width=0.6\linewidth]{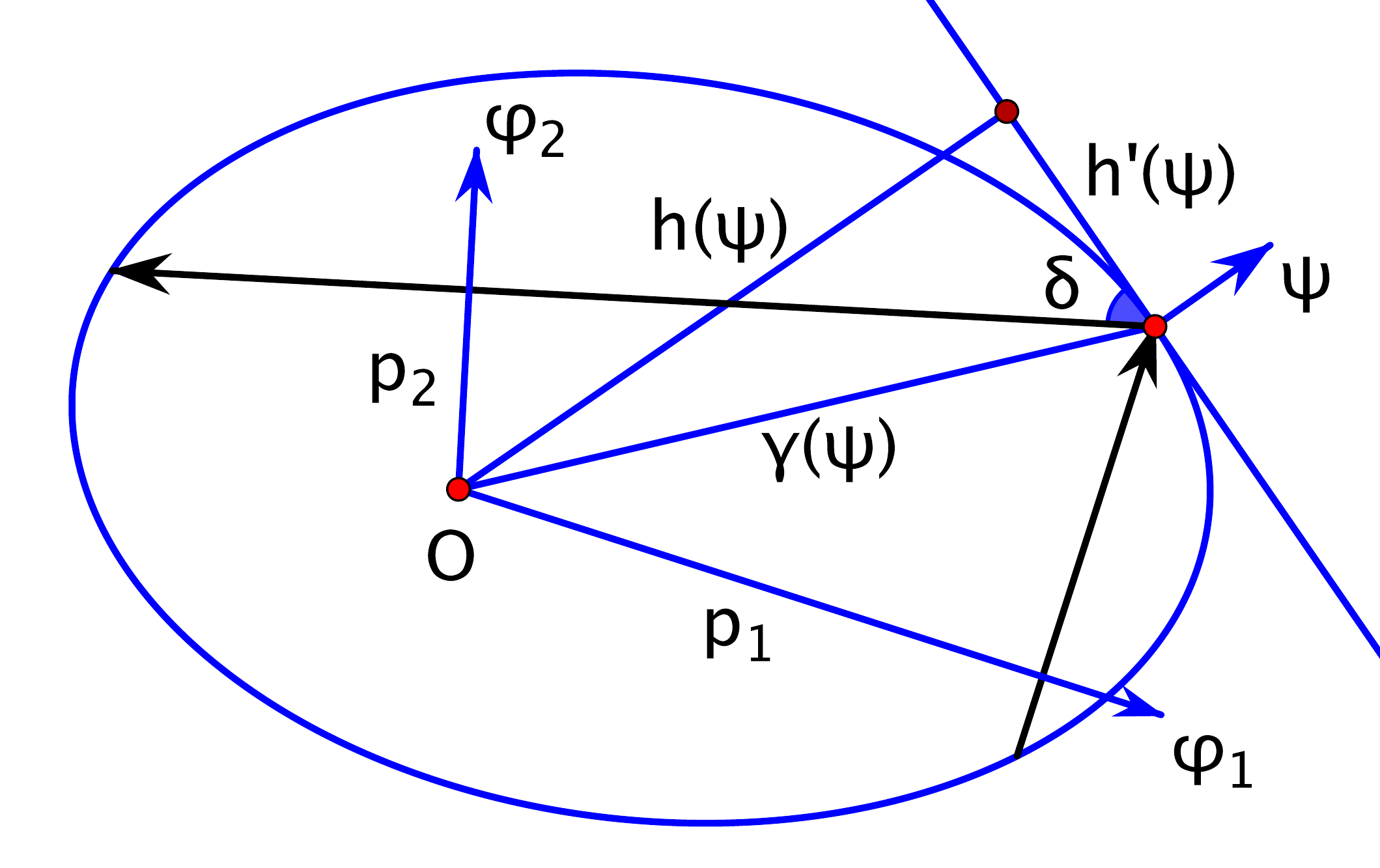}
				\caption{To Proposition \ref{prop:genfunct}.}
				\label{Genfunct}
			\end{figure}
			\begin{proof} 
				We refer to Figure \ref{Genfunct}.
				
				One has
				$$
				-\frac{\partial S_1(\varphi_1,\varphi_2)}{\partial \varphi_1} = -h'(\psi) \sin \delta + h(\psi) \cos \delta.
				$$
				The position vector of the point of the curve $\gamma$ with the outer normal having direction $\psi$ is
				$$
				\gamma(\psi) = h(\psi) (\cos\psi,\sin\psi) + h'(\psi) (-\sin \psi,\cos\psi)
				$$
				(this formula is well known in convex geometry). Then, using some trigonometry,
				$$
				p_1 = \gamma(\psi) \cdot (\cos\varphi_1,\sin\varphi_1) = h(\psi) \cos \delta -h'(\psi) \sin \delta,
				$$
				as needed. The argument for $p_2$ is similar. 
			\end{proof}
			

			In order to use the function $S$ for ellipse let me remind the  computation of the support function, with respect to the center of the ellipse,  as a function of $\psi$ which is the angle made by the outer normal with the positive $x$-axes.
			\begin{lemma} \label{lemma:supel}
				Let $E$ be the ellipse $\{\frac{x^2}{a^2}+\frac{y^2}{b^2}=1\}$.
				One has: 
				$$
				h(\psi) = \sqrt{a^2 \cos^2\psi + b^2 \sin^2\psi}.
				$$
			\end{lemma}
			
			\begin{proof}
				Consider a point $(\xi,\eta)$ of the ellipse. A normal vector is given by
				$$
				N=\left(\frac{\xi}{a^2},\frac{\eta}{b^2}\right) = \ell (\cos \psi,\sin \psi),\quad \ell=|N|.
				$$
				and the tangent line at this point has the equation
				$$
				\frac{\xi x}{a^2} + \frac{\eta y}{b^2} =1.
				$$
				The distance from the origin to this line is
				$$
				\frac{1}{\sqrt{\frac{\xi^2}{a^4}+\frac{\eta^2}{b^4}}} = \frac{1}{\ell}.
				$$
				
				On the other hand, 
				$$
				\xi=a^2 \ell \cos\psi, \ \eta=b^2 \ell \sin\psi,
				$$
				and the equation of the ellipse implies that 
				$$
				\ell^2 = \frac{1}{a^2 \cos^2\psi + b^2 \sin^2\psi}.
				$$
				Therefore $h(\psi) = 1/\ell = \sqrt{a^2 \cos^2\psi + b^2 \sin^2\psi}$, as claimed.
			\end{proof}

			\subsection{ Integral for elliptic billiard in various forms}
			Billiard in ellipse is integrable. The integral can be understood at least in three ways. 
			
			1. {\it Jacobi-Chasles integral} $\lambda$. 
			
			Given an oriented line not intersecting the segment between the focii. Consider the confocal ellipse $$E_\lambda= \left\{\frac{x^2}{a^2-\lambda}+\frac{y^2}{b^2-\lambda}=1\right\}$$ tangent to this line
			then $\lambda$ is an integral of the billiard, i.e. it remains constant under the reflections.
			
			2. {\it Joahimsthal integral }$J:=\frac{\sin\delta}{h}$.

			This corresponds to the conservation of $<v, \nabla q>$, where $v$ the unit vector of the line, and $q$ is the quadratic form $q(x)=\frac{<Qx,x>}{2}$, with the diagonal matrix $ Q=diag(\frac{1}{a^2},\frac{1}{b^2})$, see Figure \ref{Joachim}.
			
			\begin{figure}[h]
				\centering
				\includegraphics[width=0.5\textwidth]{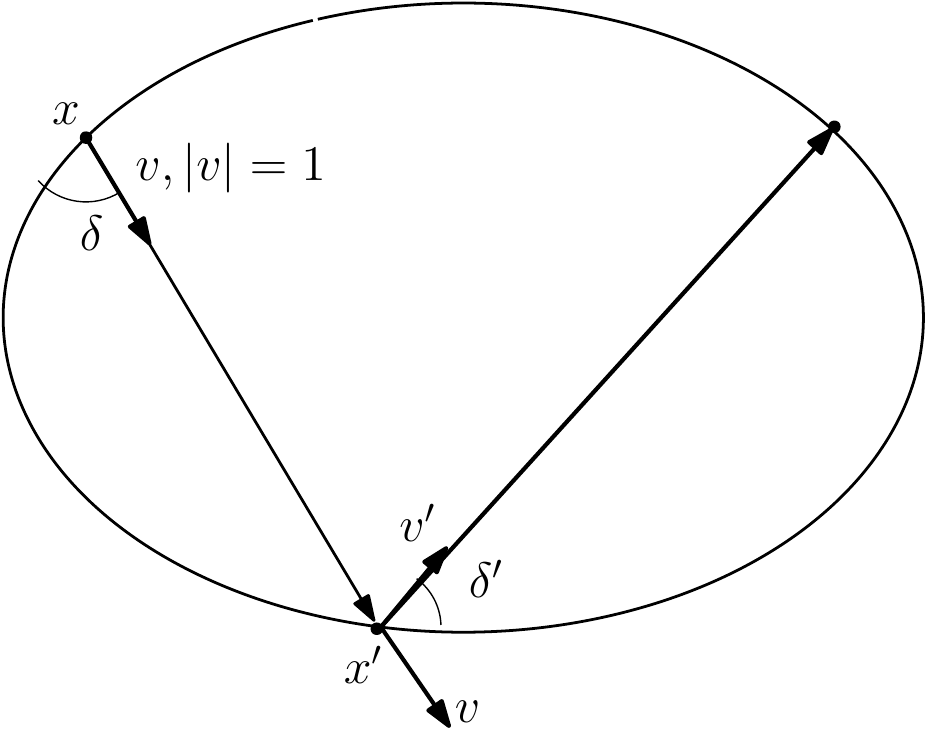}
				\caption{Joachimsthal integral}
				\label{Joachim}
			\end{figure}
			Indeed, the conservation follows from the following implications:
			$$<Q(x+x'),x'-x>=0\quad\Rightarrow\quad <Qx,v>=-<Qx',v>.$$
			Also $$<Qx',v+v'>=0\ \Rightarrow\  <Qx',v'>=-<Qx',v> \Rightarrow
			{<Qx,v>=<Qx',v'>}  .$$
			One can express this conservation law in terms of $h,\delta$ as follows:
			$$
			-<v, \nabla f>=-|\nabla f| \sin \delta=|Qx|\sin\delta=\frac{\sin\delta}{h}=J.
			$$
			Here we used that $|Qx|=\frac{1}{h}$ as we explained in the proof of Lemma \ref{lemma:supel}.
			
			3. {\it Product of two momenta }$F$.
			
			Let us consider a segment of the billiard trajectory tangent to a confocal ellipse $E_\lambda$ with the semi-axes $\sqrt{b^2-\lambda}<\sqrt{a^2-\lambda}$. Let $d_1,d_2$ be the distances from the foci to the line and $\alpha$ be the direction of its normal. Then we define $F:=d_1d_2$ (This definition of the integral we learned from Michael Berry).
			It then follows from the next theorem that $F$
			is indeed an integral.
			
			\subsection {The relations between conserved quantities}
			\begin{theorem}\label{thm:JFlambda}
				The following relations hold true:
				\begin{enumerate}
					\item 
					$J=\sqrt{\lambda}/ab$
					\item 
					$F=\sqrt{b^2-\lambda}$
					\item In terms of the eccentricities $e=\frac{a}{c},f=\frac{\sqrt{a^2-\lambda}}{c}$ of $E, E_\lambda$ we have the formulas: $$\lambda=c^2(e^2-f^2),\   J=\frac{\sqrt{e^2-f^2}}{c\ e\sqrt{e^2-1}},\  where\  c=\sqrt{a^2-b^2}.$$ 
				\end{enumerate}
			\end{theorem}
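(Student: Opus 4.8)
The plan is to obtain (1) and (2) by two short independent evaluations and then to deduce (3) by pure algebra. The guiding principle for (1) is that the Joachimsthal quantity $J=\langle Qx,v\rangle=\sin\delta/h$ is constant along the whole trajectory, so its value on the invariant curve associated with $E_\lambda$ may be read off from any single line tangent to $E_\lambda$; choosing that line cleverly reduces the computation to one point. For (2) the plan is to invoke the classical focus--tangent relation for an ellipse.

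For part (1) I would evaluate $J$ on the vertical tangent to $E_\lambda$ at its right vertex $(\sqrt{a^2-\lambda},0)$, that is the chord $x=\sqrt{a^2-\lambda}$ of $E$ with unit direction $v=(0,1)$. Since $0<\lambda<b^2$, this line is genuinely a chord of $E$ tangent to the caustic, hence a legitimate trajectory segment. Substituting $x=\sqrt{a^2-\lambda}$ into the equation of $E$ gives the contact point
$$
x_0=\left(\sqrt{a^2-\lambda},\ \frac{b\sqrt{\lambda}}{a}\right),
$$
and with $Q=\diag(1/a^2,1/b^2)$ one finds
$$
J=\bigl|\langle Qx_0,v\rangle\bigr|=\frac{\sqrt{\lambda}}{ab},
$$
since the first coordinate of $Qx_0$ is annihilated by $v=(0,1)$. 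This is (1); the only point needing care is the identification of $\langle Qx,v\rangle$ with $\sin\delta/h$, which is exactly the manipulation carried out just before the statement.

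For part (2) I would use the classical fact that for any tangent line to an ellipse the product of the distances from the two foci equals the square of the semi-minor axis. Applied to $E_\lambda$, whose foci coincide with those of $E$ and whose semi-minor axis is $\sqrt{b^2-\lambda}$, this gives $F=d_1d_2=b^2-\lambda$, the semi-minor axis squared. I would justify the classical fact in one line: parametrize the tangent at $(\sqrt{a^2-\lambda}\cos t,\sqrt{b^2-\lambda}\sin t)$, write out the two focal distances, and multiply; after using $c^2=a^2-b^2$ the trigonometric terms collapse and leave the semi-minor axis squared.

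Finally, part (3) is immediate algebra and carries no real obstacle. From $e=a/c$ and $c^2=a^2-b^2$ one gets $e^2-1=b^2/c^2$, and from $f=\sqrt{a^2-\lambda}/c$ one gets $e^2-f^2=\lambda/c^2$, whence $\lambda=c^2(e^2-f^2)$; then
$$
\frac{\sqrt{e^2-f^2}}{c\,e\,\sqrt{e^2-1}}=\frac{\sqrt{\lambda}/c}{c\cdot(a/c)\cdot(b/c)}=\frac{\sqrt{\lambda}}{ab}=J
$$
by (1). Thus there is no genuinely hard step: the one substantive idea is the choice of the vertex tangent in (1), which turns an a~priori trajectory-wide invariant into a single-point calculation, and the remaining difficulty is merely the bookkeeping of signs and of the orientation of $v$ in matching $\langle Qx,v\rangle$ with $\sin\delta/h$.
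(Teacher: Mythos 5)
Your proof is correct and rests on the same underlying strategy as the paper's --- exploit invariance to evaluate each conserved quantity on a single, symmetrically placed tangent line --- but the concrete choices differ in a way worth noting. For (1) the paper works with the form $J=\sin\delta/h$ and the chord through the vertex $(a,0)$ of the \emph{outer} ellipse $E$: there $h=a$, and $\sin\delta$ is extracted by equating $p=a\cos\delta$ with the support function of $E_\lambda$, giving $b\sin\delta=\sqrt{\lambda}$. You instead use the quadratic-form expression $J=\langle Qx,v\rangle$ on the vertical chord tangent to $E_\lambda$ at \emph{its} vertex; this avoids the support function of $E_\lambda$ altogether and reduces (1) to a one-line dot product, at the modest cost of having to fix the sign and orientation convention in the identification of $\langle Qx,v\rangle$ with $\sin\delta/h$ (taking the absolute value is legitimate here since $J=\sin\delta/h>0$). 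For (2) the two arguments amount to the same classical focus--tangent fact: the paper writes $d_{1,2}=p\mp c\cos\varphi$ in line coordinates and substitutes $p=h_\lambda(\varphi)$, while you parametrize $E_\lambda$ and compute the two focal distances directly; both computations are valid and yield $d_1d_2=b^2-\lambda$. Note that your value $F=b^2-\lambda$ agrees with what the paper's own proof derives, whereas the displayed statement of the theorem reads $F=\sqrt{b^2-\lambda}$; the radical there appears to be a typo rather than a discrepancy in your argument. Part (3) is the same routine algebra in both treatments, which you carry out explicitly where the paper merely asserts it.
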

			\begin{proof}1) Consider an oriented line passing through the point
				$(a,0)$ with right normal having angle $\delta$ (see Figure \ref{fig:Jlambda}).
				Then for $p$ of this line we have $$p=\sqrt{(a^2-\lambda)cos^2\delta+(b^2-\lambda\sin^2\delta)},$$
				and hence$$
				p=a\ cos\delta,
				$$see Figure \ref{fig:Jlambda}.
				Therefore these two give $$b
				\ \sin\delta=\sqrt{\lambda}.$$
				\begin{figure}[h]
					\centering
					\includegraphics[width=0.4\linewidth]{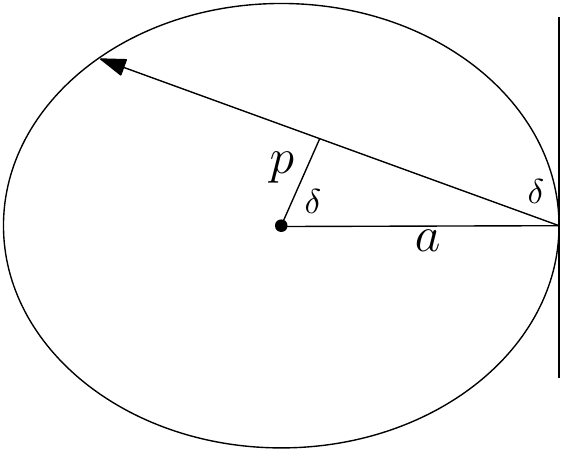}
					\caption{Relation of $J$ and $\lambda$.}
					\label{fig:Jlambda}
				\end{figure}
				
				On the other hand from the definition of $J$ we have:
				$$
				J=\frac{\sin\delta}{a}.
				$$
				Thus 
				$$
				J=\frac{\sqrt{\lambda}}{ab}.
				$$

				2) Given a line with coordinates $(p,\varphi)$, we have
				$$
				d_1=p-c\cos\phi,\  d_2=p+c\cos\varphi,
				$$ 
				where $c^2=a^2-b^2$ (see
				Figure \ref{berry}).
				\begin{figure}[h]
					\centering
					\includegraphics[width=0.4\linewidth]{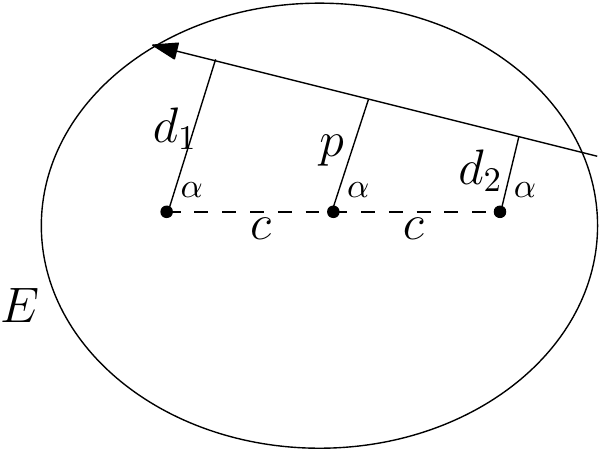}
					\caption{Integral $F=d_1d_2$.}
					\label{berry}
				\end{figure}
				
				\begin{equation}\label{dd}
					F:=d_1d_2=p^2-c^2\cos^2\varphi.
				\end{equation}
				If the line is tangent to $E_\lambda$, then $p=h_\lambda(\varphi)$, where $h_\lambda$ is the support function of $E_\lambda$. Hence, using Lemma \ref{lemma:supel} we rewrite (\ref{dd})
				$$F=(a^2-\lambda)\cos^2\varphi+(b^2-\lambda)\sin^2\varphi- c^2 \cos^2 \varphi=b^2-\lambda.
				$$
				3) Follows from item 1) and the definition of eccentricities.
			\end{proof}
			
			\section{\bf Invariant measure on an invariant curve}
			
			Suppose we have a curve on the phase cylinder  $\mathbf{A}$ which is invariant under the billiard map $T$. Suppose this curve is a graph and lies in the level set of the integral $F(p,\varphi)=const$. Then there is a natural measure
			$d\mu$ on the curve which is invariant under $T$. According to V.I.Arnold this is called Gelfand-Leray form, which by another Arnold' principle was probably discovered earlier.
			Next we compute this measure explicitly.
			\begin{theorem}\label{thm:measure}
				The invariant measure on the invariant curve corresponding to the value $J$ of Joachimsthal integral and other related quantities given by Theorem \ref{thm:JFlambda} is given by the formula:
				$$
				d\mu=
				\frac{d\psi}{\sqrt{a^2-c^2\sin^2\psi}\sqrt{(1-J^2a^2)+J^2c^2\sin^2\psi}}.
				$$
				Therefore the measure of the arc $[0,\psi]$ equals 
				$$
				\mu([0,\psi])=\frac{1}{cf}F\left(\varphi,\frac{1}{f}\right),\ 
				\varphi=\arcsin\sqrt{\frac{(d+1)\tan^2\psi}{(d+1)\tan^2\psi+d}},\ $$
				$$	d=\frac{1-J^2a^2}{J^2c^2}=\frac{(b^2-\lambda)e^2}{\lambda}=\frac{f^2-1}{e^2-f^2}e^2>0 
				$$
				The measure of the whole invariant curve equals
				$$
				U=\frac{4}{c f} F\left(\frac{\pi}{2}, \frac{1}{f}\right).
				$$ Here and below $e,f$ are the eccentricities of $E,E_\lambda$ and $F(\varphi,k)=\int_{0}^{\varphi}\frac{dt}{\sqrt{1-k^2\sin^2 t}}$ is the elliptic integral of the first kind. 
			\end{theorem}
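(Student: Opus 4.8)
The plan is to realize the invariant measure as a Gelfand--Leray (Leray residue) form of the billiard integral and then change coordinates to the normal angle $\psi$ at the reflection point. In the line coordinates $(\varphi,p)$ of Proposition \ref{prop:genfunct} the symplectic form is $\omega=dp\wedge d\varphi$ and, by equation (\ref{dd}), the invariant curve is the level set $\{F=b^2-\lambda\}$ of $F=p^2-c^2\cos^2\varphi$. Since $T$ preserves both $\omega$ and $F$, the Gelfand--Leray form defined on the level set by $dF\wedge d\mu=\omega$ is $T$-invariant; because $\partial F/\partial p=2p$ it equals $d\varphi/(2p)$, and after rescaling by the immaterial overall constant we take $d\mu=d\varphi/p$ to match the normalization of the statement.

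Next I would pass to the reflection-point normal angle $\psi$. On the invariant curve the outgoing ray satisfies $\varphi=\psi+\delta$, where $\delta$ is the reflection angle and, by the Joachimsthal integral, $\sin\delta=J\,h(\psi)$; moreover $p=h(\psi)\cos\delta+h'(\psi)\sin\delta$ by Proposition \ref{prop:genfunct}. Differentiating $\sin\delta=J\,h(\psi)$ gives $\delta'=h'\sin\delta/(h\cos\delta)$, hence $d\varphi=(1+\delta')\,d\psi=\frac{p}{h\cos\delta}\,d\psi$, and therefore $d\mu=\frac{d\varphi}{p}=\frac{d\psi}{h(\psi)\cos\delta}$. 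It remains to insert the closed forms $h(\psi)^2=a^2\cos^2\psi+b^2\sin^2\psi=a^2-c^2\sin^2\psi$ (Lemma \ref{lemma:supel}) and $\cos^2\delta=1-J^2h^2=(1-J^2a^2)+J^2c^2\sin^2\psi$, which is exactly the stated formula for $d\mu$.

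For the antiderivative I would factor the two radicals as $a^2-c^2\sin^2\psi=c^2(e^2-\sin^2\psi)$ and $(1-J^2a^2)+J^2c^2\sin^2\psi=J^2c^2(d+\sin^2\psi)$ with $d=(1-J^2a^2)/(J^2c^2)$, reducing $\mu([0,\psi])$ to $\frac{1}{c^2J}\int_0^{\psi}\frac{d\psi'}{\sqrt{(e^2-\sin^2\psi')(d+\sin^2\psi')}}$. This is a standard elliptic integral; the substitution $\sin^2\varphi=\frac{(d+1)\sin^2\psi}{d+\sin^2\psi}$ (equivalently the $\arcsin$ of the statement, tabulated in \cite{Byrd-Friedman}) rationalizes the product of the two quadratics and turns the integral into $\frac{1}{e\sqrt{d+1}}F(\varphi,k)$ with $k^2=\frac{e^2+d}{e^2(d+1)}$. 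Feeding in the expressions for $d$ and $J$ from Theorem \ref{thm:JFlambda}, a direct simplification gives $k^2=1/f^2$ and collapses the prefactor to $1/(cf)$, producing $\mu([0,\psi])=\frac{1}{cf}F(\varphi,1/f)$. The total measure then follows by symmetry: the integrand is even and $\pi$-periodic in $\psi$, so $U=4\,\mu([0,\pi/2])=\frac{4}{cf}F(\pi/2,1/f)$.

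The routine but delicate point is the elliptic reduction of the third step: one must check that the quoted substitution really rationalizes the product $(e^2-\sin^2\psi)(d+\sin^2\psi)$, that the emerging modulus is exactly $1/f$, and that the algebraic factors telescope into the single constant $1/(cf)$; this is where Theorem \ref{thm:JFlambda} and the Byrd--Friedman tables do the work. Everything before it is a short symplectic computation, and the only conceptual input is the invariance of the Gelfand--Leray form under an integrable exact symplectic map.
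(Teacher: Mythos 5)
Your proposal is correct and follows essentially the same route as the paper: the Gelfand--Leray form $d\varphi/p$ on the level set of $F=p^2-c^2\cos^2\varphi$, the change of variable $\varphi=\psi+\delta(\psi)$ combined with $\sin\delta=Jh(\psi)$ and $p=h\cos\delta+h'\sin\delta$ to obtain $d\mu=d\psi/\bigl(h\sqrt{1-J^2h^2}\bigr)$, and then the Byrd--Friedman reduction with the same substitution $\sin^2\varphi=\frac{(d+1)\sin^2\psi}{d+\sin^2\psi}$, modulus $k^2=\frac{e^2+d}{e^2(1+d)}=1/f^2$, and prefactor collapsing to $1/(cf)$. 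The only differences are cosmetic (you compute $1+\delta'=p/(h\cos\delta)$ directly instead of cancelling it inside the combined fraction, and you apply the trigonometric substitution without first passing to $x=\sin^2\psi$).
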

			\begin{proof}
				The invariant measure on the curve $\{F=const\}$ can be written as: $$d\mu=\frac{1}{F_p}d\varphi.
				$$ Due to explicit form of $F$ in $(p,\varphi)$ coordinates (Theorem \ref*{thm:JFlambda}) we have:
				$$d\mu=\frac{1}{p}d\varphi
				$$ We compute using the change of variable on the invariant curve $\varphi\rightarrow\psi$ (see Figure \ref{fig:phipsi}):
				$$
				\varphi=\psi+\delta(\psi).$$ 
				\begin{figure}[h]
					\centering
					\includegraphics[width=0.4\linewidth]{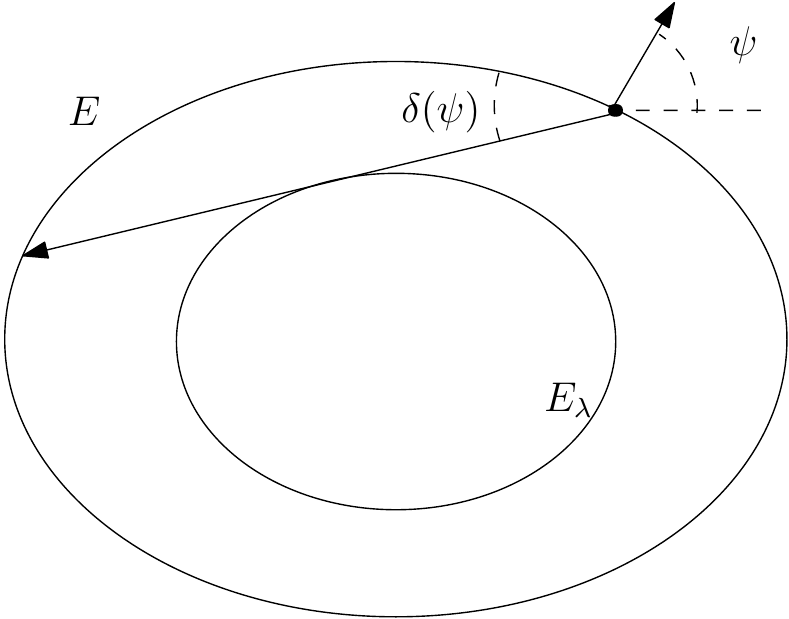}
					\caption{Change of variable on the invariant curve $\varphi\rightarrow\psi$ $
						\varphi=\psi+\delta(\psi).$}
					\label{fig:phipsi}
				\end{figure}
				Using the formula
				$$ p=h(\psi)\cos\delta(\psi)+h'(\psi)\sin\delta(\psi)
				$$
				We have \begin{equation}\label{measure}
					d\mu=\frac{1}{p}d\varphi=\frac{(1+\delta'(\psi))d\psi}{h(\psi)\cos\delta(\psi)+h'(\psi)\sin\delta(\psi)}
				\end{equation}
				
				Next we use the explicit form of Joachimsthal integral:
				$$
				Jh(\psi)=\sin\delta(\psi)
				$$ and hence also
				$$
				Jh'(\psi)=\cos \delta(\psi)\delta'(\psi).
				$$
				Substituting into (\ref{measure}) we get
				\begin{equation}\label{measuredelta}
					d\mu=\frac{1}{p}d\varphi=\frac{J(1+\delta'(\psi))d\psi}{\sin\delta(\psi)\cos\delta(\psi)(1+\delta'(\psi))}=\frac{Jd\psi}{\sin\delta(\psi)\cos\delta(\psi)}=\frac{d\psi}{h\sqrt{1-h^2}}.
				\end{equation}
				thus we compute using Lemma\ref{lemma:supel}:
				$$
				d\mu=\frac{d\psi}{\sqrt{a^2\cos^2\psi+b^2\sin^2\psi}\sqrt{1-J^2(a^2\cos^2\psi+b^2\sin^2\psi)}}=
				$$
				$$
				\frac{d\psi}{\sqrt{a^2-(a^2-b^2)\sin^2\psi}\sqrt{(1-J^2a^2)+J^2(a^2-b^2)\sin^2\psi}}=
				$$
				$$
				\frac{d\psi}{\sqrt{a^2-c^2\sin^2\psi}\sqrt{(1-J^2a^2)+J^2c^2\sin^2\psi}}.
				$$
				Therefore the measure of the segment $[0,\psi]$
				$$
				\mu([0,\psi])=\int_{0}^{\psi}\frac{d\psi}{\sqrt{a^2-c^2\sin^2\psi}\sqrt{(1-J^2a^2)+J^2c^2\sin^2\psi}}.
				$$
				Changing to $x=\sin^2\psi$ we get
				$$
				\mu([0,\psi])=\frac{1}{2}\int_{0}^{\sin^2\psi}\frac{dx}{\sqrt{x(1-x)}\sqrt{a^2-c^2x}\sqrt{(1-J^2a^2)+J^2c^2x}}=
				$$
				$$
				\frac{1}{2Jc^2}\int_{0}^{\sin^2\psi}\frac{dx}{\sqrt{x(1-x)}\sqrt{e^2-x}\sqrt{d+x}}=\frac{g}{2Jc^2}F(\varphi,k),$$
				where in the last step we used the reduction of the pseudo-elliptic integral to the elliptic integral of the first kind \cite{Byrd-Friedman}[p.112; integral 254.00].
				In the last formula $e=a/c$ is the eccentricity of the ellipse, $c=\sqrt{a^2-b^2}, d=\frac{1-J^2a^2}{J^2c^2}$. Now we need to compute parameters of the elliptic integral given in \cite{Byrd-Friedman}[p.112].
				In order to compute $d$ we use Theorem \ref{thm:JFlambda}
				\begin{equation}\label{kg}
					d=\frac{1-J^2a^2}{J^2c^2}=\frac{(b^2-\lambda)e^2}{\lambda}=\frac{f^2-1}{e^2-f^2}e^2>0,
				\end{equation}
				$$
				\quad J=\frac{\sqrt{e^2-f^2}}{ce\sqrt{e^2-1}},\ \lambda=c^2(e^2-f^2),
				$$
				where $f$ is the eccentricity of $E_\lambda$. Moreover, we compute the parameters $k,\varphi, g$ for the elliptic integral. 
				$$k=\sqrt{\frac{e^2+d}{e^2(1+d)}}=1/f,\quad g=\frac{2}{\sqrt{e^2(1+d)}}=\frac{2\sqrt{e^2-f^2}}{fe\sqrt{e^2-1}}.$$
				The angle $\varphi$ is computed by the formula:
				\begin{equation}\label{sinvarphi}
					\quad \sin^2\varphi=\frac{(d+1)(\sin^2\psi)}{\sin^2\psi+d}=\frac{(d+1)\tan^2\psi}{(d+1)\tan^2\psi+d},
				\end{equation}where $d$ is given in (\ref{kg}).
				Next we see that the coefficient$$\frac{g}{2Jc^2}=\frac{1}{cf}.$$ 
				Thus finally we have
				$$
				\mu([0,\psi])=\frac{1}{cf}F\left(\varphi,\frac{1}{f}\right),\quad U=\frac{4}{c f} F\left(\frac{\pi}{2}, \frac{1}{f}\right),
				$$ where $U$ is the measure of the whole curve.

				

			\end{proof}
			\section{\bf Mather $\beta$-function}
			Now we are in position to find Mather $\beta$-function for ellipse stated in Theorem \ref{thm:action} and Corollary \ref{cor:geometric}. We shall use the invariant measure and non-standard generating function $S$.
			Consider the invariant curve of the rotation number $\rho=\frac{m}{n}$ corresponding to the rational caustic $E_\lambda$ and to the value $J$ of Joachimsthal integral.
			We shall give a proof of the formula for rational rotation number $\rho$, but it is easy to see that it remains valid for irrational $\rho$.
			\begin{proof}[Proof of Theorem \ref{thm:action}]
				Let $\rho=m/n$ and $(p_i,\varphi_i), i=1,..,n$ denote the coordinates of the edges $l_i$ of a Poncelet polygon. Set
				$$
				\psi_i=\frac{\varphi_{i-1}+\varphi_i}{2},\  \delta_i=\frac{\varphi_{i}-\varphi_{i-1}}{2}.
				$$
				
				The perimeter of the Poncelet polygon can be computed by means of the generating function $S$ given in Proposition \ref{prop:genfunct}  as follows (see \cite{BT}):
				\begin{equation}\label{action}
					\beta\left(\frac{m}{n}\right)=\frac{1}{n}\sum_{i=1}^{n}S(\varphi_{i-1},\varphi_{i})=\frac{2}{n}\sum_{i=1}^{n} h(\psi_i)\sin\delta_i,
				\end{equation}
				
				Next we integrate both sides of (\ref{action}) with respect to the measure $d\mu$ and using the invariance of the measure we get:
				$$
				\beta\left(\frac{m}{n}\right)U=2\int h(\psi)\sin\delta(\psi)d\mu,
				$$where $U$ is the measure of the whole curve.
				Thus we have using the explicit expression of the measure (\ref*{measuredelta}):
				$$
				\beta(\rho)=\frac{2}{U}\int_{0}^{2\pi} \frac{Jh\sin\delta}{\sin\delta\cos\delta}d\psi=\frac{2J}{U}\int_{0}^{2\pi}\frac{h}{\sqrt{1-J^2h^2}}d\psi.$$Substitute the explicit formula for $h$ we obtain:
				$$
				\beta(\rho)=\frac{8J}{U}\int_{0}^{\pi/2}\frac{{\sqrt{a^2\cos^2\psi+b^2\sin^2\psi}}}{\sqrt{1-J^2(a^2\cos^2\psi+b^2\sin^2\psi)}}d\psi=$$
				$$
				\frac{8J}{U}\int_{0}^{\pi/2}\frac{\sqrt{a^2-c^2\sin^2\psi}}{\sqrt{1-J^2a^2+J^2c^2\sin^2\psi}}d\psi=$$
				$$
				\frac{8Jc}{UJc}\int_{0}^{\pi/2}\frac{\sqrt{e^2-\sin^2\psi}}{\sqrt{d+\sin^2\psi}}d\psi=\frac{8}{U}\int_{0}^{\pi/2}\frac{\sqrt{e^2-\sin^2\psi}}{\sqrt{d+\sin^2\psi}}\frac{d\sin^2\psi}{2\sin\psi\cos\psi}=
				$$
				$$
				\frac{4}{U}\int_{0}^{1}\frac{\sqrt{e^2-x}}{\sqrt{d+x}\sqrt{x(1-x)}}dx=
				\frac{4e^2g}{U}\left(\frac{k^2}{\alpha^2}F(\pi/2,k)-\left(\frac{k^2}{\alpha^2}-1\right)\Pi(\pi/2,\alpha^2,k)\right).$$
				where we used the values $\alpha^2=\frac{1}{1+d},\ k=\frac{1}{f}$ and $g,\varphi$ as above.
				This reduction to the complete elliptic integral of the third kind is given in \cite{Byrd-Friedman}[p.112 integral 254.13 then 339.01].  Next we use \cite{Byrd-Friedman}[integral 414.01] for the complete integral $\Pi(\pi/2,\alpha^2, k)=:\Pi(\alpha^2,k)$ and finally obtain:
				$$
				\beta(\rho)=\frac{4e^2g}{U}\left(\frac{k^2}{\alpha^2}K(k)-\left(\frac{k^2-\alpha^2}{\alpha^2}\right)\left(K(k)+\frac{\alpha[K(k)E(\phi,k)-E(k)F(\phi,k)]}{\sqrt{(1-\alpha^2)(k^2-\alpha^2)}}\right)\right),
				$$ where $\phi=\arcsin (\alpha/k)$.
				Simplifying we get:
				$$
				\beta(\rho)=\frac{4e^2g}{U}\left(K(k)-\left(\frac{\sqrt{k^2-\alpha^2}}{\alpha \sqrt{1-\alpha^2}}\right)[K(k)E(\phi,k)-E(k)F(\phi,k)]\right).
				$$
				Substituting the values of parameters $$g=\frac{2\sqrt{e^2-f^2}}{fe\sqrt{e^2-1}},\ U=\frac{4}{cf}K(k),\  k=1/f,\ \alpha^2=\frac{e^2-f^2}{f^2(e^2-1)},\  k^2-\alpha^2=\frac{f^2-1}{f^2(e^2-1)}$$
				we get:
				$$\beta(\rho)=\frac{2ce\sqrt{e^2-f^2}}{e^2-1}-\frac{2cf}{K(k)}[K(k)E(\phi,k)-E(k)F(\phi,k)],$$
				$$ \phi=\arcsin \sqrt {\frac{e^2-f^2}{e^2-1}}=\arcsin\frac{\sqrt{\lambda}}{b}.
				$$
			\end{proof}
			
			\begin{proof}[Proof of the Corollary \ref{cor:geometric}]
				This follows immediately from Theorem \ref{thm:action} using the following two relations. The first is on the perimeter of the ellipse $|E_\lambda|$: 
				$$
				|E_\lambda|=4\int_{0}^{\pi/2}\sqrt{(a^2-\lambda)-c^2\sin^2 t}\ dt=4\sqrt{a^2-\lambda}\ E(k),$$ where $\ k=1/f,\ cf=\sqrt{a^2-\lambda},$ and $f$ is the eccentricity of $E_\lambda$ as above.
			And the second formula for the rotation number $\rho$, which we shall prove in Theorem \ref{thm:Omega} in Section \ref{sec:rotnumber}:$$
				\ \rho=\frac{F(\phi,k)}{2K(k)}.
				$$ 
			\end{proof}
			\section{\bf Mather $\beta$-function and the Lazutkin parameter}
			
			Let me remind the notion of the Lazutkin parameter. Given a convex caustic $\mathcal C$ of convex billiard curve $\gamma$ (not necessarily ellipse), one has a conservation law stating that for any point $P\in\gamma$ the Lazutkin parameter $$L:=|PX|+|PY|-|\overset{\frown}{XY}|$$
			does not depend on the point $P$ (see \cite{Ta}). Here $X,Y\in\mathcal C$ are the tangency points of tangent lines to $\mathcal C$ from $P$ and overarc denotes the arc between the indicated points.
			
			Suppose $P_i, i=1,..,n$ are the vertices of billiard $n$-periodic trajectory $\mathcal P$ making $m$ turns. For any vertex $P_i$ we write the Lazutkin parameter:
			$$
			L=|P_iX_i|+|P_iY_i|-|\overset{\frown}{X_iY_i}|,\quad i=1,..,n.
			$$
			Summing these identities we get
			$$
			nL=|\mathcal P|-m|\mathcal C|.
			$$
			Dividing by $n$, we obtain the general formula (see \cite{siburg}), valid for any billiard with convex caustic $\mathcal C$:
			$$
			\beta(\rho)=L+\rho|\mathcal C|,
			$$where $L$ is Lazutkin parameter, $\rho=\frac{m}{n}$ is the rotation number and $|\cdot|$ is the perimeter. Comparing the last formula with one of Corollary \ref{cor:geometric} we get the following:
			\begin{corollary}
				For the Lazutkin parameter $L$ of the caustic $E_\lambda$ of the elliptic billiard $E$ we have the following formula:
				$$
				L(E_\lambda)=\frac{2a\sqrt{\lambda}}{b}-2\sqrt{a^2-\lambda}\ E(\phi,k).
				$$
			\end{corollary}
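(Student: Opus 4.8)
The plan is to obtain $L(E_\lambda)$ by directly comparing the two expressions for $\beta(\rho)$ that are now available, without carrying out any further integration. The key conceptual point is that the Lazutkin parameter is, by the conservation law recalled just above, a function of the caustic $E_\lambda$ alone and carries no dependence on the rotation number $\rho$; this is exactly what will make the comparison legitimate.

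First I would specialize the general Siburg identity to the confocal caustic $\mathcal C=E_\lambda$, so that it reads
$$\beta(\rho)=L(E_\lambda)+\rho\,|E_\lambda|.$$
Next I would write down Corollary \ref{cor:geometric},
$$\beta(\rho)=\frac{2a\sqrt{\lambda}}{b}-2\sqrt{a^2-\lambda}\,E(\phi,k)+\rho\,|E_\lambda|,\qquad \phi=\arcsin\frac{\sqrt{\lambda}}{b}.$$
Subtracting the common term $\rho\,|E_\lambda|$ from both right-hand sides and equating the results isolates
$$L(E_\lambda)=\frac{2a\sqrt{\lambda}}{b}-2\sqrt{a^2-\lambda}\,E(\phi,k),$$
which is precisely the claimed formula.

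The one point deserving care is the range of validity. The Siburg identity $\beta(\rho)=L+\rho|\mathcal C|$ is derived by summing the Lazutkin relation over the $n$ vertices of an $n$-periodic orbit making $m$ turns, and hence a priori holds only for rational $\rho=m/n$, i.e. for rational caustics. I would therefore first establish the formula for those $\lambda$ whose rotation number $\rho=F(\phi,k)/2K(k)$ is rational; since such caustics are dense and both sides depend continuously (indeed analytically) on $\lambda$, the identity then extends to every $E_\lambda$. I expect this density-and-continuity step to be the only genuine subtlety, the algebraic comparison itself being immediate. In particular, the cancellation of the $\rho\,|E_\lambda|$ term provides a built-in consistency check: it confirms the expected fact that the surviving expression depends on the caustic $\lambda$ but not on $\rho$, in agreement with the definition of the Lazutkin parameter.
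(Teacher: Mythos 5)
Your proposal is correct and follows exactly the paper's route: the corollary is obtained by comparing the Siburg identity $\beta(\rho)=L+\rho|\mathcal C|$ with the expression in Corollary \ref{cor:geometric} and cancelling the common term $\rho|E_\lambda|$. Your extra remark on extending from rational to all caustics by density and continuity is a care the paper leaves implicit, but it does not change the argument.
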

			\section{\bf Rotation number $\rho$}\label{sec:rotnumber}
			In this section we give another derivation of the formula for the rotation number $\rho$ corresponding to caustic $E_\lambda$ \cite{chang}\cite{KS}. 
			\begin{theorem}\label{thm:omega}
				For the invariant curve corresponding to caustic $E_\lambda$ having eccentricity $f$ the rotation number is:
				$$\rho=\frac{F(\phi,k)}{2K(k)},
				\  k=\frac{1}{f},\ 
				\phi=\arcsin\sqrt{\frac{e^2-f^2}{e^2-1}}=\arcsin\frac{\sqrt{\lambda}}{b},
				$$ where $ K(k)=F({\pi}/{2},k)$ is the complete elliptic integral.
			\end{theorem}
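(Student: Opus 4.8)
The plan is to compute $\rho$ as the ratio of the $d\mu$-measure of a single billiard step to the total measure $U$ of the invariant curve found in Theorem \ref{thm:measure}. The underlying principle is standard: since $T$ preserves $d\mu$ and acts on the invariant curve as an orientation-preserving circle homeomorphism, the normalized measure coordinate $s(\varphi)=\frac{1}{U}\int_{\varphi_*}^{\varphi}d\mu$ conjugates $T$ to the rigid rotation $s\mapsto s+\rho$. Concretely, invariance of $d\mu$ gives $\mu([\varphi_*,T\varphi])=\mu([\varphi_*,T\varphi_*])+\mu([T\varphi_*,T\varphi])=\mu([\varphi_*,T\varphi_*])+\mu([\varphi_*,\varphi])$, so the per-step increment $\mu([\varphi,T\varphi])$ is independent of the base point and equals $\rho U$. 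Thus it will suffice to evaluate one conveniently chosen step.

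First I would rewrite $d\mu$ in the line-direction variable $\varphi$. On the invariant curve the edges are tangent to the caustic $E_\lambda$, so $p=h_\lambda(\varphi)$, the support function of $E_\lambda$; by Lemma \ref{lemma:supel} applied to $E_\lambda$ this is $h_\lambda(\varphi)=\sqrt{(a^2-\lambda)-c^2\sin^2\varphi}$. Since $d\mu=\frac{1}{p}\,d\varphi$ (as in the proof of Theorem \ref{thm:measure}), this yields $d\mu=d\varphi/h_\lambda(\varphi)$. As a consistency check, the fourfold symmetry gives $U=\int_0^{2\pi}d\varphi/h_\lambda(\varphi)=\frac{4}{cf}K(1/f)$, recovering the total measure, where I use $c^2/(a^2-\lambda)=1/f^2=k^2$ and $\sqrt{a^2-\lambda}=cf$.

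Next I would evaluate a single step at the symmetric reflection at the major-axis vertex $(a,0)$. There the outer normal points along the positive $x$-axis, so $\psi=0$, and by the computation in Theorem \ref{thm:JFlambda} the reflection angle satisfies $b\sin\delta=\sqrt{\lambda}$, i.e.\ $\delta=\phi=\arcsin(\sqrt{\lambda}/b)=\arcsin\sqrt{(e^2-f^2)/(e^2-1)}$. Hence the incoming and outgoing edges have directions $\varphi=-\phi$ and $\varphi=+\phi$, and $T$ maps the first line to the second. Therefore
\[
\rho U=\mu([-\phi,\phi])=2\int_0^{\phi}\frac{d\varphi}{\sqrt{(a^2-\lambda)-c^2\sin^2\varphi}}=\frac{2}{cf}F(\phi,1/f),
\]
using again $c^2/(a^2-\lambda)=k^2$ with $k=1/f$. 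Dividing by $U=\frac{4}{cf}K(k)$ gives $\rho=F(\phi,k)/(2K(k))$, as claimed.

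The main obstacle will be the rigorous justification of the per-step principle for the rotation number, namely that the measure coordinate genuinely linearizes the map: for irrational $\rho$ this is the statement that the analytic, measure-preserving circle map is conjugate to the rotation, while for rational $\rho=m/n$ it is exactly Poncelet closure, so that every orbit on the curve is $n$-periodic and the increment is constant. A secondary point to verify is that $[-\phi,\phi]$ is the correctly oriented forward arc of one step, which is transparent since as $\lambda\to 0$ the caustic approaches the boundary, $\phi\to 0$, and $\rho\to 0$.
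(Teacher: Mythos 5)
Your proposal is correct, and it rests on the same underlying principle as the paper's proof: the rotation number is the invariant measure of a single billiard step divided by the total measure $U$ of the invariant curve, the per-step increment being independent of the base point by invariance of $\mu$ (your justification of this point is in fact more explicit than the paper's, which simply asserts it). The implementation differs in two ways that make your computation cleaner. First, you express the measure in the line-direction coordinate $\varphi$, where $d\mu=d\varphi/h_\lambda(\varphi)$ with $h_\lambda(\varphi)=\sqrt{(a^2-\lambda)-c^2\sin^2\varphi}$ is already a Legendre-form integrand; the paper instead works in the normal-angle coordinate $\psi$ and must invoke the pseudo-elliptic reduction of Byrd--Friedman (integral 254.00) together with the substitution formula (\ref{sinvarphi}) to land on $F(\phi,1/f)$. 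Second, you evaluate the step at the reflection at the vertex $(a,0)$, where the arc is $[-\delta,\delta]$ with $b\sin\delta=\sqrt{\lambda}$, so the amplitude $\phi=\arcsin(\sqrt{\lambda}/b)$ appears immediately as the reflection angle; the paper evaluates the step given by the vertical chord tangent to $E_\lambda$, computes $\tan\theta$ at its upper endpoint, and recovers the same $\phi$ only after substituting into (\ref{sinvarphi}). The price of your route is that you must recompute $U$ in the $\varphi$-coordinate rather than quoting Theorem \ref{thm:measure} directly, but your consistency check $U=\frac{4}{cf}K(1/f)$ settles that. Both arguments are complete modulo the same standard fact about measure-preserving circle maps, which neither you nor the paper proves in detail but which is adequately covered by Poncelet closure in the rational case.
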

			
			{\bf Example.}
			{\it We see from this Theorem that for $\lambda\rightarrow 0$ the $\phi\rightarrow 0$ and hence $\rho\rightarrow 0$.
				On the other hand if $\lambda\rightarrow b$
				that is $f\rightarrow 1$ we have $\phi\rightarrow \pi/2$ and hence $\rho\rightarrow {1}/{2}$.}
			
			{\bf Remark} This formula is given in \cite{chang}\cite{KS}. 
			A beautiful method to get formula for rotation number is given in \cite{kolodziej}. Unfortunately there is a computational mistake for the integrals at the end of page 298.
			Another formula for the rotation number is given without proof in \cite{tabanov}. But in that formula $f\rightarrow 1$ does not imply to $\rho\rightarrow 1/2$. 
			\begin{proof}[Proof of Theorem \ref{thm:omega}]
				We shall use the formula for rotation number:
				$$\rho=\mu[\psi, T(\psi)]/U,$$
				where $\psi$ is a point on the curve and $T(\psi)$ its image. This is independent on the choice of $\psi$ since measure $\mu$ is invariant. Here and below we use $\psi$ as a coordinate on the invariant curve related to the angle $\varphi$ by the formula $\varphi=\psi+\delta(\psi)$ as before.
				Now we shall choose $\psi$ in this formula in such a way that the segment $[\psi, T(\psi)]$ is vertical and tangent to $E_\lambda$ (see figure \ref{fig:theta}):
				$$
				\psi=-\theta\quad and \quad T(\psi)=\theta.
				$$
				\begin{figure}[h]
					\centering
					\includegraphics[width=0.4\linewidth]{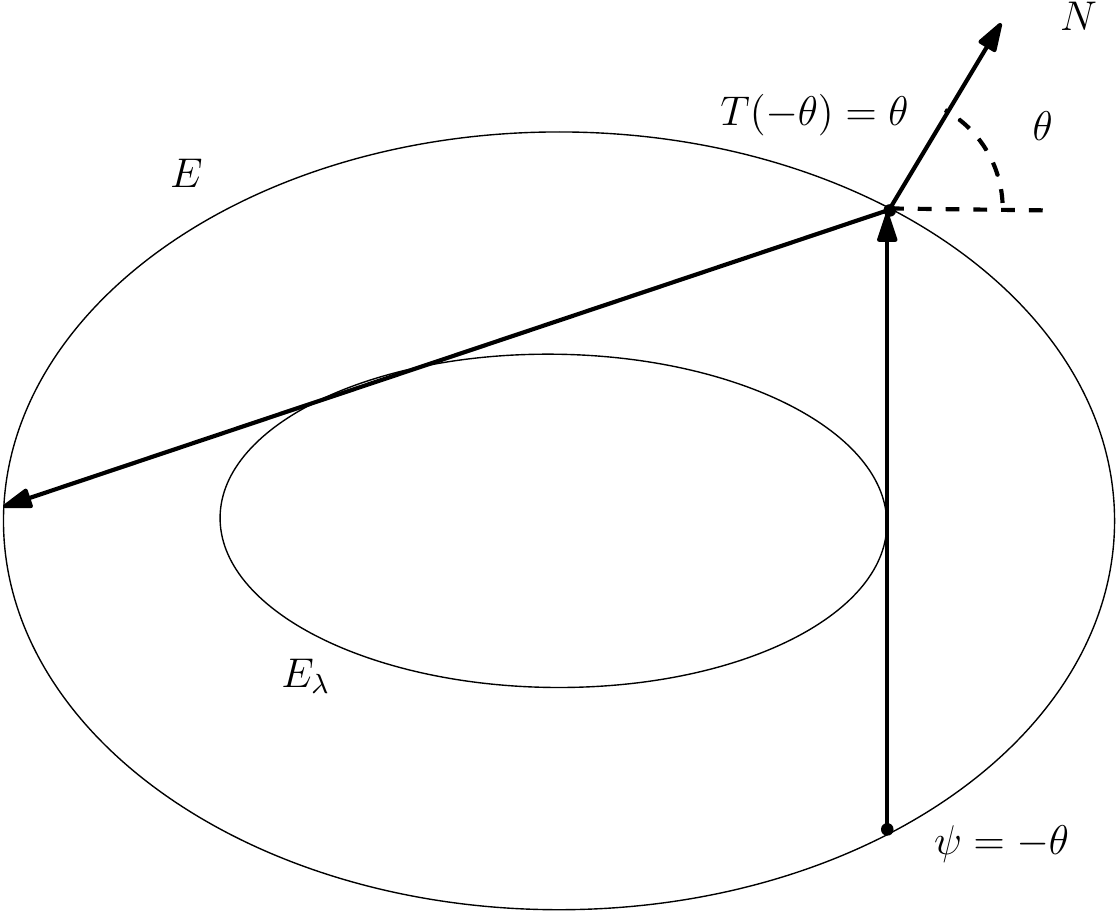}
					\caption{For computation of $\rho$}
					\label{fig:theta}
				\end{figure}
				We can easily compute $\theta$ using the normal vector 
				$
				N=\left(\frac{x}{a^2},\frac{y}{b^2}\right),
				$ where by the definition of $\theta$,
				we have 
				$$
				x=\sqrt{a^2-\lambda},\quad y=\frac{\sqrt{\lambda} b}{a}.
				$$Hence we get:
				\begin{equation}\label{eq:theta}
					\tan \theta=\frac{a^2}{b^2}\frac{\sqrt{\lambda} b}{a\sqrt{a^2-\lambda}}=\frac{a}{b}\frac{\sqrt{\lambda} }{\sqrt{a^2-\lambda}}=\frac{e\sqrt{e^2-f^2}}{f\sqrt{e^2-1}}.
				\end{equation}
				It then follows from Theorem \ref{thm:measure} that 
				$$
				\mu[-\theta,\theta]=2\mu[0,\theta]=\frac{2}{cf}F\left(\phi,\frac{1}{f}\right),
				$$
				where 
				
				$$\quad \sin^2\phi=\frac{(d+1)\tan^2\theta}{(d+1)\tan^2\theta+d}.
				$$
				Substituting $d,d+1$ from (\ref{dd}) and $\tan\theta$ from (\ref{eq:theta}) we obtain:
				$$
				\sin\phi=\sqrt{\frac{e^2-f^2}{e^2-1}}=\frac{\sqrt\lambda}{b}.
				$$
				Thus we have for the rotation number:
				$$\rho=\frac{2}{U}\mu[0,\theta]=\frac{2}{cfU}F\left(\phi,\frac{1}{f}\right)=\frac{1}{2F\left(\frac{\pi}{2},\frac{1}{f}\right)}F\left(\phi,\frac{1}{f}\right),\quad $$
				$$
				\phi=\arcsin\sqrt{\frac{e^2-f^2}{e^2-1}}=\arcsin\frac{\sqrt\lambda}{b}.
				$$
			\end{proof}

			{\bf Remark.} Analogously to the proof of Theorem \ref{thm:action} the following relation can be derived:
			\begin{equation}\label{eq:arcsin}
				\rho=\frac{4}{\pi U}\int_{0}^{\pi/2}\frac{\arcsin{(J\sqrt{a^2\cos^2\psi+b^2\sin^2\psi})}\  d\psi}{\sqrt{a^2\cos^2\psi+b^2\sin^2\psi}\sqrt{1-J^2(a^2\cos^2\psi+b^2\sin^2\psi)}}.
			\end{equation}
			Indeed, by the the following formula holds for $(m,n)$-periodic:
			$$
			2\pi m=\sum_{i=1}^{n} 2\delta_i,
			$$
			because $2\delta_i$ is the angle between the edges $l_{i-1}$ and the $l_i $.
			Integrating this with respect to the invariant measure $d\mu$ we get:$$
			2\pi m\ U=2n \int\delta d\mu.
			$$Thus we have
			$$\rho=\frac{1}{\pi U}\int \delta d\mu=\frac{1}{\pi U}\int_{0}^{2\pi}\frac{J\delta}{\sin\delta\cos\delta}d\psi.
			$$
			The last integral gives formula (\ref*{eq:arcsin}).
			Notice, that unlike Theorem (\ref{thm:omega}), integral   (\ref*{eq:arcsin}), cannot be reduced to elliptic integrals.

			\section{\bf Mather $\beta-$function and rigidity}\label{sec:rigidity}
			We start this section with the proof of Theorem \ref{thm:Omega}.
			\begin{proof}
				The first step is based on a combination of several powerful results. By a Theorem of John Mather the function $\beta$ is differentiable at a rational point $\rho$ if and only if there is an invariant curve consisting of periodic orbits with rotation number $\rho$.
				Moreover, all the orbits lying on these invariant curves are action minimizing.
				It then follows from Aubry-Mather theory and theorem of Mather on differentiability of $\beta$-function  that there exist invariant curves of all rotation numbers $\rho\in(0,\frac{1}{4}]$, and these curves foliate the domain between the curve for $\rho=1/4$ and the boundary of the phase cylinder $\mathbf{A}$ (see \cite{siburg} for the argument in the case of circular billiard).
				
				Therefore,
				billiard in $\Omega_2$ meets the assumptions of \cite{BM} and hence must be an ellipse.
				
				The last step is to show that this ellipse is an isometric copy of $\Omega_1$. 
				Indeed let $a_i>b_i, i=1,2$ are semi-axes of the two ellipses.
				First, take the value of the rotation number $\frac{1}{4}$ and use the equality of the $\beta$-functions at the value $1/4$.
				This yields 
				\begin{equation}\label{eq:abab}
					a_1^2+b_1^2=a_2^2+b_2^2.
				\end{equation}
				Second, mention that by the definition $\beta(0)=0$ holds true for any domain. But the derivative $\beta'(0)$ gives the circumference of the domain. Therefore, by the assumption of Theorem \ref{thm:Omega}, we have $\beta'_1(0)=\beta'_2(0)$ and hence $|\Omega_1|=|\Omega_2|$, where $|\Omega|$ is the circumference of $\Omega$.
				Next we use classical formula for  $|\Omega|$ of arbitrary convex domain via the support function:
				$$
				|\Omega|=\int_{0}^{2\pi}h(\psi)d\psi.
				$$
				Therefore for the ellipses $\Omega_{1,2}$ we write
				$$
				|\Omega_i|=
				4\int_{0}^{\pi/2}\sqrt{\frac{a_i^2+b_i^2}{2}+
					\frac{a_i^2-b_i^2}{2}\cos 2\psi}\quad d\psi=
				$$
				$$=
				2\sqrt{2}\int_{0}^{\pi}\sqrt{{(a_i^2+b_i^2)}+
					{(a_i^2-b_i^2)}\cos t}\ \ dt=2\sqrt{2}\int_{0}^{\pi}\sqrt{{A}+
					{c^2_i}\cos t}\ \ dt,
				$$ 
				where we introduced $A:=a_1^2+b_1^2=a_2^2+b_2^2$. Consider now the last integral as a function of the parameter $C:=c^2=a^2-b^2$, while $A$ is fixed.
				$$
				f(C):=2\sqrt{2}\int_{0}^{\pi}\sqrt{A+
					C\cos t}\ dt
				$$
				Differentiating $f$ with respect to $C$ we obtain:
				$$
				f'=\sqrt{2}\int_{0}^{\pi}\frac{\cos t}{\sqrt{A+
						C\cos t}}\ dt=\sqrt{2}\int_{0}^{\pi/2}\left[\frac{\cos t}{\sqrt{A+
						C\cos t}}-\frac{\cos t}{\sqrt{A-
						C\cos t}}\right]\ dt.
				$$
				It is easy to see that the for $t\in(0,\pi/2)$ the integrand is negative, hence $f$ is strictly monotone decreasing in $C$.
				Therefore, the equality $|\Omega_1|=|\Omega_2|$ is possible only when $C_1=C_2$. This together with (\ref{eq:abab}) implies that the ellipses are isometric.
			\end{proof}
			The second part of the given proof leads naturally to the following question.
			
			{\bf Question.} How many values of $\beta$-function determine the ellipse in the class of ellipses. More precisely we ask if ellipse is determined by any two values of $\beta$-function $\beta(\rho_1), \beta(\rho_2))$ for the rotation numbers $ \rho_{1,2}\in(0,\frac{1}{2}]$. 
			
			In order to prove this one needs more analysis of the formula of minimal action of Theorem \ref{thm:action}. Notice that in \cite{sorrentino} the reconstruction of ellipse is given by means of infinitesimal data of the $\beta$-function near $0$.
			
			A partial result in the direction of this question is the following
			\begin{theorem}\label{thm:diameter}
				Ellipse can be determined by two values of $\beta(\rho_1),\beta(\rho_2)$ where $\rho_1=\frac{1}{2}$ and $\rho_2=\frac{m}{n}$ is any rational in $(0,\frac{1}{2})$.
			\end{theorem}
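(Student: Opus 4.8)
The plan is to recover the two semi-axes separately, getting $a$ from the value at $1/2$ and then $b$ from the value at $\rho_2$. First I would identify $\beta(1/2)$. By Theorem~\ref{thm:omega} the rotation number tends to $1/2$ exactly when $f\to 1$, i.e. when the caustic $E_\lambda$ degenerates to the focal segment; the corresponding periodic orbit is the $2$-periodic bouncing along the major axis, of total length $2\cdot 2a$ over $n=2$ vertices, so $\beta(1/2)=2a$. (Equivalently, letting $f\to 1$ in Corollary~\ref{cor:geometric} only the first summand survives and yields $2a$, as already noted in the Example there.) Hence $\beta(1/2)$ recovers the semi-major axis $a$ and nothing else: under the scaling $(a,b)\mapsto(ta,tb)$ the $\beta$-function is homogeneous of degree one while rotation numbers are scale invariant, so $\beta_{a,b}(\rho)=a\,g_\rho(b/a)$ for a function $g_\rho$ of the axis ratio alone, and $g_{1/2}\equiv 2$ is constant.

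With $a$ determined, the entire problem collapses to recovering the ratio $s:=b/a\in(0,1)$ from the single number $\beta(\rho_2)=a\,g_{\rho_2}(s)$. Thus it suffices to prove that, for each fixed rational $\rho_2=m/n\in(0,1/2)$, the map $s\mapsto g_{\rho_2}(s)$ is strictly monotone on $(0,1)$, hence injective. Equivalently, normalizing $a=1$ and fixing $\rho=m/n$, I would show that $\beta(\rho)$ is a strictly monotone function of $b$. A reassuring test case is $\rho=1/4$: by the Example in the Introduction $\beta(1/4)=\sqrt{a^2+b^2}$, so $g_{1/4}(s)=\sqrt{1+s^2}$ is strictly increasing, interpolating between the degenerate value $1$ as $s\to 0$ and the circular value $\sqrt 2=2\sin(\pi/4)$ at $s=1$.

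To prove monotonicity for general $m/n$ I would work along the constraint curve on which $\rho$ is held equal to $m/n$. For fixed $k=1/f$ the map $\phi\mapsto F(\phi,k)/(2K(k))$ increases strictly from $0$ to $1/2$, so the rotation-number identity of Theorem~\ref{thm:omega} determines $\phi=\phi(k)$ uniquely. Combining this with the geometric relations
$$
\sin^2\phi=\frac{\lambda}{a^2-c^2},\qquad k^2=\frac{c^2}{a^2-\lambda},\qquad a=1,
$$
one solves for $(\lambda,c)$, and hence for $b=\sqrt{a^2-c^2}$, as functions of the single parameter $k$. This parametrizes the one-parameter family of admissible ellipses, and I would then differentiate the closed form of Corollary~\ref{cor:geometric} along this curve, using the rotation-number equation to eliminate $d\lambda/db$, and check that $d\beta(\rho)/db$ keeps a fixed sign.

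The main obstacle is precisely signing this derivative. In contrast to the proof of Theorem~\ref{thm:Omega}, where the perimeter could be written as a single integral $f(C)$ with a manifestly negative integrand, here the caustic parameter $\lambda$ and all of the elliptic-integral data $\phi,\ k,\ |E_\lambda|$ vary simultaneously with $b$, so a naive differentiation produces a sum of elliptic integrals of competing signs. The key technical step will be to use the rotation-number identity to trade the variation of $\phi$ against that of $k$, thereby collapsing $d\beta/db$ to a single pseudo-elliptic integrand of definite sign, in the same spirit as the reductions carried out in the proof of Theorem~\ref{thm:action}. Once strict monotonicity of $g_{\rho_2}$ is established, injectivity yields $b/a$, and together with $a$ from $\beta(1/2)$ this pins down the ellipse uniquely.
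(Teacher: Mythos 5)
Your reduction is sound as far as it goes: $\beta(1/2)=2a$ does recover the major semi-axis (the paper uses exactly this observation), and by the scaling argument the theorem is then equivalent to strict monotonicity of $b\mapsto\beta(m/n)$ for fixed $a$. But the proof stops precisely at the point where the actual work lies. You write that ``the main obstacle is precisely signing this derivative'' and that the key technical step \emph{will be} to collapse $d\beta/db$ to an integrand of definite sign --- this is a statement of intent, not an argument. Since the strict monotonicity of $g_{\rho_2}$ is the entire content of the theorem beyond the trivial first step, the proposal as written has a genuine gap: differentiating the formula of Corollary~\ref{cor:geometric} along the constraint $\rho=m/n$ produces a combination of incomplete and complete elliptic integrals in which $\phi$, $k$, $\lambda$ and $|E_\lambda|$ all vary, and no mechanism is exhibited that forces a definite sign. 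It is not obvious that this computation closes, and you have not closed it.

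The paper obtains the same monotonicity by an entirely soft comparison argument that avoids elliptic integrals altogether. Assuming two ellipses with equal diameter $2a$, equal $\beta(m/n)$, but $b_1<b_2$, one takes the linear expansion $A$ along the minor axis carrying $\Omega_1$ to $\Omega_2$. Then $A(P_1)$ is a polygon inscribed in $\Omega_2$ with the same rotation number as the Poncelet polygon $P_2$, and $|A(P_1)|>|P_1|=|P_2|$ since $A$ strictly increases the length of every non-horizontal segment; this contradicts the fact that $P_2$, being an $(m,n)$-periodic billiard trajectory, maximizes length in its homotopy class among inscribed polygons. If you want to salvage your analytic route you would need to actually carry out and sign the derivative computation; alternatively, note that your monotonicity claim is exactly what the expansion-map argument proves, with the maximizing property of Poncelet polygons doing the work that your unproven sign condition was meant to do.
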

			\begin{proof}
				Notice first that $\beta(\frac{1}{2})=2a$ is the diameter of ellipse. We argue by contradiction. Suppose $\Omega_1, \Omega_2$ are two ellipses with the same diameter $2a$, satisfying  $\beta_1(\frac{m}{n})=\beta_2(\frac{m}{n})$, but $b_1<b_2$, see Figure \ref{fig:ellipses}. In this case we can introduce a linear map $A$ which is the expansion map along the $y$-axes transforming $\Omega_1$
				to $\Omega_2$. Notice that $A$ increases perimeter of any polygon.
				
				\begin{figure}[h]
					\centering
					\includegraphics[width=0.4\linewidth]{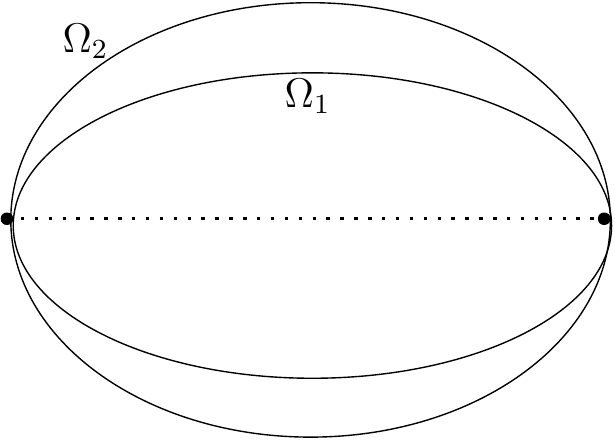}
					\caption{Ellipses with the same diameter}
					\label{fig:ellipses}
				\end{figure}
				Denote by $P_1,P_2$ two Poncelet polygons of the rotation number $\frac{m}{n}$ for $\Omega_1$ and $\Omega_2$ respectively.
				Obviously, the polygons $A(P_1)$ and $P_2$ have the same rotation number. The condition $\beta_1(\frac{m}{n})=\beta_2(\frac{m}{n})$ implies that the perimeters of $P_{1,2}$ are equal:
				$$
				|P_1|=|P_2|.
				$$
				Hence we have the inequality $$|A(P_1)|>|P_2|,$$ since $A$ is expanding.
				But this contradicts the fact $P_2$ is a Poncelet polygon is a length maximizer in its homotopy class.
			\end{proof}
			{\bf Remark.} It is plausible that the result of Theorem \ref{thm:diameter} remains valid when the rotation number $\rho_2$ is irrational. 
			\section{Discussion}
			Let me pose here most natural problems related the results of this paper:
			\begin{enumerate}
				\item  Is it possible to relax symmetry assumption in the main Theorem \ref{thm:action}? Our method of proof of Theorem \ref{thm:action} relies on the approach related to the so-called E.Hopf type rigidity phenomenon from \cite{BM}. This method is very robust and it is not clear at the moment how it can be generalized.
				\item Another problem is to adopt our approach to a smaller neighborhood of the bondary of the phase cylinder.
				\item All known approaches to rigidity in billiards, are based on the properties of orbits near the boundary. We believe there are rigidity results based on the behavior far from the boundary.
				\item It would be interesting to prove that ellipse is determined  by 
				any two values of Mather $\beta$ function $\beta(\rho_1),\beta(\rho_2)$ where $\rho_1, \rho_2$ are any two rotation numbers in $(0,\frac{1}{2})$.
			\end{enumerate}

	\end{document}